\theoremstyle{plain}                            
\numberwithin{equation}{section}
\newtheorem{thm}{Theorem}[section]
\newtheorem{theorem}[thm]{Theorem}
\newtheorem{lemma}[thm]{Lemma}
\begin{document}

\title{An Exploration of Sequence A000975}
\author{Paul K. Stockmeyer \\ \ \\
Department of Computer Science\\
             College of William and Mary\\
               Williamsburg, Virginia 23187\\
               USA\\
Email: \href{mailto:stockmeyer@cs.wm.edu}{stockmeyer@cs.wm.edu}}
\date{}
\maketitle

\begin{abstract}
Sequence A000975 in the Online Encyclopedia of Integer Sequences (OEIS)
starts out 1, 2, 5, 10, 21, 42, 85, \dots\ .  As of July 1, 2016, the description 
in the OEIS lists several characterizations of this sequence and numerous examples of
instances where this sequence occurs.  It also presents a ``not yet proved'' result, a 
conjecture, and an unanswered question concerning this sequence.  In this paper we show 
that all of these proposed results are in fact true.
\end{abstract}

\section{Characterizations of Sequence A000975}

Sequence \href{https://oeis.org/A000975}{A000975} in the Online Encyclopedia of Integer Sequences (OEIS)  \cite{OEIS} starts out 1, 2, 5, 10, 21, 42, 85, \dots\ .
Throughout this paper we denote this sequence as sequence $A$ and the $n$th term of this sequence as $A(n)$, written in function notation. 
The OEIS defines this sequence recursively as follows.

\medskip\noindent{\bf Characterization 1:}

\medskip 1.  $A(1) = 1$,

2.  $A(2n) = 2A(2n\!-\!1)$, and

3.  $A(2n\!+\!1) = 2A(2n)+1$.


\medskip\noindent
Thus to get the next term we either double the preceding value or double it and add 1.  But doubling is the same as appending a 0 in base 2, while doubling and adding 1 is the same as appending a 1 in base 2.  This provides a second characterization.

\medskip\noindent{\bf Characterization 2:}  For $n>0$, $A(n)$ is the number whose binary representation has length $n$ and consists of
alternating ones and zeros.

\medskip\noindent We illustrate this characterization in Table 1.
\begin{table}[h!]
$$
\begin{array}{l@{\hspace{5pt}}c@{\hspace{5pt}}r@{\hspace{5pt}}c@{\hspace{5pt}}r}
A(1) &=& 1&=&1_{(2)}\\
A(2) &=& 2&=&10_{(2)}\\
A(3) &=& 5&=&101_{(2)}\\
A(4) &=& 10&=&1010_{(2)}\\
A(5) &=& 21&=&10101_{(2)}\\
A(6) &=& 42&=&101010_{(2)}\\
A(7) &= &85&=&1010101_{(2)}\\
A(8) &=&170&=&10101010_{(2)}\\
A(9) &=& 341&=&101010101_{(2)}\\
A(10) &=& 682&=&1010101010_{(2)}
\end{array}
$$
\caption{The sequence $A$ written in binary.}
\end{table}

If we add $A(n)$ to $A(n-1)$, the binary representation of the sum is a string of $n$ ones, with value $2^n\!-\!1$.  This
gives us our third characterization.

\newpage
\medskip\noindent{\bf Characterization 3:} 

\medskip 
1.  $A(1) = 1$, and

2.  $A(n) = \left(2^n\! -\!1\right)- A(n\!-\!1)$ for $n> 1$.

\medskip


Alternatively, if we subtract $A(n\!-\!2)$ from $A(n)$, most of the ones in the binary representations cancel, leaving $2^{n-1}$.
This gives us yet another characterization.

\medskip\noindent{\bf Characterization 4:}

\medskip
1.  $A(1)=1$,

2.  $A(2)=2$, and

3.  $A(n)= A(n\!-\!2) + 2^{n-1}$ for $n>2$.

\medskip

There are several standard methods for obtaining a closed form expression for $A(n)$.  We present a rather unusual
derivation.  Recall that the binary representation of the fraction $\frac23$ is
$$
\frac23 = 0.10101010\dots\ _{(2)}.
$$
Multiplying this value by $2^n$ moves the binary point $n$ places to the right.  Rounding down then truncates this expression,
yielding $A(n)$, according to Characterization 2, and giving us our next characterization.

\medskip\noindent{\bf Characterization 5:}  $A(n) = \left\lfloor{\displaystyle \frac{2}{3}}(2^n)\right\rfloor$ for all $n\ge 1$.

\medskip\noindent Other expressions are
\begin{eqnarray*}
A(n) 
     &=& \left\{
              \begin{array}{ll}
                \frac{\textstyle 2^{n+1} -1 \rule[-3pt]{0pt}{10pt}}{\textstyle 3 \rule{0pt}{9pt}}&\mbox{if $n$ odd}\\ \ \\            
                \frac{\textstyle 2^{n+1} -2 \rule[-3pt]{0pt}{10pt}}{\textstyle 3 \rule{0pt}{9pt}}&\mbox{if $n$ even}
              \end{array} 
        \right.\\ \ \\ \ \\
     &=&\frac{2^{n+2} - 3 - (-1)^n}{6}.
\end{eqnarray*}
It is an easy exercise to prove that all these characterizations of $A(n)$ are equivalent.  We observe 
that $A(n)$ is even or odd exactly when $n$ is even or odd, respectively.

\section{Occurrences of Sequence A000975}

Why do we care about a particular sequence?  Often it is because the sequence occurs as the answer to some counting problem.  The OEIS lists several places where the values in sequence \href{https://oeis.org/A000975}{A000975} occur.  
Here are some of the more interesting.

\medskip\noindent{\bf Occurrence 1:}  
$A(n)$ is the number of moves needed to solve the $n$-ring Chinese Rings 
puzzle (baguenaudier) if the rings are moved one at a time. See, for example, \cite[Chapter 1]{Hinz}.

\medskip
Figure \ref{rings} shows the state graph for the 4-ring puzzle.  The 16 vertices represent the possible states of the puzzle, and two vertices are joined by an edge if the two corresponding states are one move apart.  The labels on the vertices denote the
positions of the 4 rings in that state, with 0 representing a ring that is off the sliding bar and 1 a ring that is on.

\begin{figure}[h]
\centerline{\includegraphics[width=5.02in,height=1.32in,keepaspectratio]{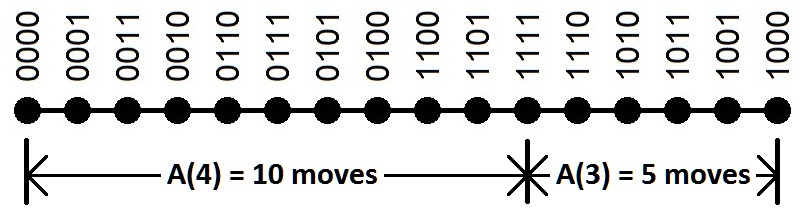}}
\caption{The state graph for the 4-ring  Chinese Rings puzzle.}
\label{rings}
\end{figure}

It is easy to confirm that the state graph for the $n$-ring puzzle is a path of length $2^n-1$ from the state labeled $0^n$ 
to the state labeled $10^{n-1}$.  The sub-path from state $0^n$ to state $1^n$ constitutes an optimal solution of the
$n$-ring puzzle, while the
sub-path from state $1^n$ to state $10^{n-1}$ represents an optimal solution of the $(n\!-\!1)$-ring puzzle. The number of moves
in an optimal solution to this puzzle thus satisfies Characterization 3 of sequence $A$.

\medskip
Figure \ref{rings} also illustrates a related occurrence of our sequence.

\medskip\noindent{\bf Occurrence 2:}  $A(n)$ is the distance between a string of $n$ zeros and a string of $n$ ones in the standard $n$-bit binary Gray code. Again, see, for example, \cite[Chapter 1]{Hinz}.

\bigskip
A special case of a problem posed by Donald Knuth \cite{knuth} and solved by O.~P.~Lossers \cite{Lossers} provides our next occurrence

\medskip\noindent{\bf Occurrence 3:}  $A(n)$ is the number of ways to partition a set of $n\!+\!2$ people sitting around a circular table
into three affinity groups with no two members of a group seated next to each other.

\medskip
We illustrate this occurrence in Figure \ref{dk}, showing the $A(4)=10$ partitions of six people into three affinity groups.  For
concreteness we assign the names A and B to the affinity groups of the people at the bottom left and bottom right, respectively.

\begin{figure}[h]
\centerline{\includegraphics[width=5.1in,height=1.94in,keepaspectratio]{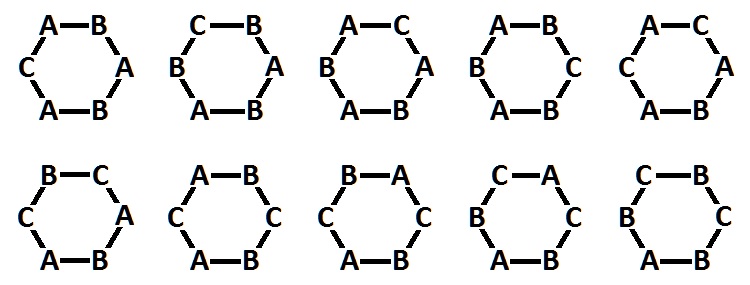}}
\caption{The ten partitions of 6 people into three affinity groups.}\label{dk}
\end{figure}

Following \cite{Lossers}, we note that there are $2^n\!-\!1$ strings of length $n\!+\!2$ that start with AB, contain all the
letters A, B, and C, and have no two adjacent letters the same.  If such a string ends in either B or C, it can be wrapped 
into a circle to represent an affinity partition of $n\!+\!2$ people.  If such a string ends in A, the A can be deleted and the shortened
string will 
represent an affinity partition of $n\!+\!1$ people.  The number of affinity partitions of $n\!+\!2$ people is thus another problem that satisfies Characterization 3 of sequence $A$.

\medskip
Equivalently, $A(n)$ is the number of different 3-colorings for the vertices of all triangulated $(n\!+\!2)$-gons if the colors of the two base vertices are fixed.

\section{A ``not yet proved'' property proposed by Antti Karttunen}

Antti Karttunen has suggested that our sequence \href{https://oeis.org/A000975}{A000975} serves as a link between 
sequences \href{https://oeis.org/A0000217}{A0000217} (the triangular numbers)
and \href{https://oeis.org/A048702}{A048702} (the even length binary palindromes divided by 3) in the OEIS.  The triangular numbers, which we represent 
by $T(n)$, are
well-known:  $T(n) = n(n\!+\!1)/2$.  
The even length binary palindromes, denoted $P(n)$, (sequence \href{https://oeis.org/A048701}{A048701} in the OEIS), are less well known.  
We illustrate the sequence $P(n)/3$ 
in Table \ref{pal}.

\begin{table}[h]
$$
\begin{array}{l@{\hspace{5pt}}c@{\hspace{5pt}}r@{\hspace{5pt}}r@{\hspace{5pt}}r}
P(1)/3&=&11_{(2)}/3 = &3/3=&1\\
P(2)/3&=&1001_{(2)}/3 =& 9/3=&3\\
P(3)/3&=&1111_{(2)}/3 =&15/3=&5\\
P(4)/3&=&100001_{(2)}/3=&33/3=&11\\
P(5)/3&=&101101_{(2)}/3=&45/3=&15\\
P(6)/3&=&110011_{(2)}/3=&51/3=&17\\
P(7)/3&=&111111_{(2)}/3=&63/3=&21\\
P(8)/3&=&10000001_{(2)}/3=&129/3=&43\\
P(9)/3&=&10011001_{(2)}/3=&153/3=&51\\
P(10)/3&=&10100101_{(2)}/3=&165/3=&55\\
\end{array}
$$
\caption{The sequence of even length binary palindromes divided by 3.}
\label{pal}
\end{table}

The proposal of  Karttunen is that the values $A(k)$ serve as the indices $n$ where $T(n)$ and $P(n)/3$ agree, as illustrated in 
Table \ref{blue}.

\begin{table}[h!]
$$
\begin{array}{r|rrrrrrrrrrrrrrr}
n&{\bf \color{blue}1}&{\bf \color{blue}2}&3&4&{\bf \color{blue}5}&6&7&8&9&{\bf \color{blue}10}&11&12&13&14&15
           \rule[-5pt]{0pt}{15pt}\\ \hline
T(n)&{\bf \color{blue}1}&{\bf \color{blue} 3}&6&10&{\bf \color{blue} 15}&21&28&36&45&{\bf \color{blue} 55}&66&78&91&105&120
             \rule[-5pt]{0pt}{15pt}\\ \hline
P(n)/3&{\bf \color{blue} 1}&{\bf \color{blue} 3}&5&11&{\bf \color{blue} 15}&17&21&43&51&{\bf \color{blue} 55}&63&65&73&77&85
            \rule[-5pt]{0pt}{15pt}
\end{array}
$$
\caption{Common values of $T(n)$ and $P(n)/3$.}
\label{blue}
\end{table}


\begin{lemma} For any positive integer $n$ let $k=\lfloor\log_2(n)\rfloor+1$. Then the binary representation of $n$ is $k$
bits long, and $P(n)$ satisfies
$$P(n) = n2^k + R(n),$$ 
where $R(n)$ is the binary reversal of $n$ {\rm(}sequence \href{https://oeis.org/A030101}{$A030101$} in the OEIS{\rm)}.
\end{lemma}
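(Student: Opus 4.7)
The plan is to decode the definition of $P(n)$ by classifying even-length binary palindromes according to their length. An even-length binary palindrome of length $2k$ is uniquely determined by its first $k$ bits, which must form a $k$-bit string beginning with $1$; hence there are exactly $2^{k-1}$ palindromes of length $2k$. Summing as a geometric series, the total number of even-length binary palindromes of length at most $2k$ is $1+2+4+\cdots+2^{k-1}=2^k-1$.

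From this count I would conclude that palindromes of length exactly $2k$ occupy indices $2^{k-1}$ through $2^k-1$ in the sequence $P$. This range is precisely the set of positive integers whose binary representation has $k$ bits, establishing the first claim that $k=\lfloor \log_2(n)\rfloor+1$ is the bit-length of $n$ and also half the bit-length of $P(n)$.

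Next I would argue that when the palindromes of length $2k$ are sorted by value, the first half of $P(n)$ (read as a $k$-bit binary string) is exactly the binary representation of $n$. The value order of these palindromes matches the lexicographic order of their first halves, which in turn matches the numerical order of those first halves as $k$-bit integers. Since the $2^{k-1}$ possible first halves range in value from $2^{k-1}$ to $2^k-1$, the palindrome whose global index is $n$ in this block has first half equal to $n$ itself.

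Finally I would read off the value: the leading $k$ bits of $P(n)$ contribute $n\cdot 2^k$, while the trailing $k$ bits form the reverse of the $k$-bit binary representation of $n$. That reversed string, interpreted as an integer (with any leading zeros arising from trailing zeros of $n$ absorbed silently), is exactly $R(n)$ as defined in \href{https://oeis.org/A030101}{A030101}. Combining these pieces gives $P(n)=n\cdot 2^k+R(n)$. The main obstacle is the indexing step: one must carefully justify that sorting the palindromes globally by value produces consecutive blocks of sizes $1,2,4,\ldots,2^{k-1},\ldots$, and that within each block the ordering coincides with the integer ordering of the first halves, so that the \emph{global} index $n$ of a length-$2k$ palindrome equals its first-half value.
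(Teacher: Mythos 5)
Your proof is correct, and it is in fact more of a proof than the paper gives: the paper simply asserts that the lemma ``follows immediately from the definition of $P(n)$'' and checks the example $n=12$. What you have done is justify precisely the point the paper takes for granted. If one reads $P$ as ``the even-length binary palindromes listed in increasing order,'' then the substance of the lemma is the indexing fact you isolate: there are $2^{j-1}$ palindromes of each even length $2j$, so $2^k-1$ of length at most $2k$, hence the length-$2k$ block occupies indices $2^{k-1}$ through $2^k-1$; and since within that block the value order agrees with the order of the first halves (two such palindromes with first halves $a<b$ differ by $(b-a)2^k+\bigl(R(b)-R(a)\bigr)\ge 2^k-(2^k-1)>0$), the palindrome with global index $n$ has first half exactly the $k$-bit representation of $n$. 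From there $P(n)=n2^k+R(n)$ is just reading off the two halves, with the observation you make that leading zeros of the reversed half are harmless because $R(n)$ in A030101 is the reversal interpreted as an integer. The only stylistic remark: you flag the block-consecutiveness of the sorted order as ``the main obstacle,'' but it is immediate since every length-$2k$ palindrome is at least $2^{2k-1}$ while every shorter one is below $2^{2k-2}$; you could state that in one line rather than leaving it as a caveat.
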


\noindent For example, when $n=12=1100_{(2)}$ we have $k=4$  and $R(12) = 0011_{(2)} = 3$, so
$P(12) = 11000011_{(2)}=12\left( 2^4\right) + 3 = 195.$  This Lemma follows immediately from the definition of $P(n)$.

\begin{theorem}  For all $n>0$, we have $T(n) = P(n)/3$ if and only if $n = A(k)$ for some positive integer $k$.
\end{theorem}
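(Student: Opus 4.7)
The plan is to use the preceding Lemma to convert the equation $T(n)=P(n)/3$ into a single diophantine-style identity involving $n$, its binary length $k$, and its bit-reversal $R(n)$, and then to use tight size bounds on these quantities to pin down the admissible solutions.

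Concretely, substituting $T(n)=n(n+1)/2$ and $P(n)=n\cdot 2^k + R(n)$ and clearing denominators gives the equivalent condition
\[
n\bigl(3n+3-2^{k+1}\bigr) \;=\; 2R(n).
\]
Set $d := 3n+3-2^{k+1}$. Because $n$ has exactly $k$ binary digits, $2^{k-1}\le n<2^k$; and because the high bit of $n$ is $1$, the low bit of $R(n)$ is $1$, so $1\le R(n)<2^k$. Reading the displayed equation both ways then gives $d\ge 1$ (the right side is positive) and $nd<2^{k+1}\le 4n$, which forces $d\in\{1,2,3\}$.

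A three-way case split finishes the ``only if'' direction. The case $d=3$ would give $3n=2^{k+1}$, impossible modulo $3$. The case $d=2$ gives $n=(2^{k+1}-1)/3$, which is an integer exactly when $k$ is odd. The case $d=1$ gives $n=(2^{k+1}-2)/3$, which is an integer exactly when $k$ is even. In both surviving cases the forced value of $n$ coincides with the closed-form expression for $A(k)$ recorded just after Characterization 5, so $T(n)=P(n)/3$ implies $n=A(k)$.

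For the converse, I would plug $n=A(k)$ back into the displayed equation and verify it directly. The alternating binary pattern from Characterization 2 makes this transparent: when $k$ is odd, $A(k)=1010\cdots 01_{(2)}$ is a palindrome, so $R(A(k))=A(k)$ and the equation reduces to $d=2$; when $k$ is even, $A(k)=1010\cdots 10_{(2)}$ reverses to $0101\cdots 01_{(2)}=A(k-1)=A(k)/2$, so $R(A(k))=A(k)/2$ and the equation reduces to $d=1$. Each sub-case then collapses to an identity in $2^k$ that follows from the closed form. The only non-routine step is the bounding argument that traps $d$ in $\{1,2,3\}$; once that is in hand, the case analysis and the structural computation of $R(A(k))$ are essentially mechanical.
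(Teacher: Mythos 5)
Your proposal is correct, and it is a genuinely different organization of the argument from the paper's, even though both rest on the same ingredients: Lemma 3.1's decomposition $P(n)=n2^k+R(n)$, the size bounds $2^{k-1}\le n<2^k$ and $0<R(n)<2^k$, and the parity-dependent closed forms of $A(k)$. The paper proves the theorem by a four-way case split on where $n$ sits relative to $A(k)$: it verifies equality directly for $n=A(k)$ ($k$ odd and $k$ even), and then shows $P(n)/3>T(n)$ for $2^{k-1}\le n\le A(k)-1$ and $P(n)/3<T(n)$ for $A(k)+1\le n\le 2^k-1$ by chaining inequalities through $2^{k+1}/3$ and $A(k)$. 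You instead clear denominators to get $n\bigl(3n+3-2^{k+1}\bigr)=2R(n)$, trap the integer factor $d=3n+3-2^{k+1}$ in $\{1,2,3\}$ using exactly the same size bounds, kill $d=3$ by divisibility, and read off $n=(2^{k+1}-1)/3$ or $(2^{k+1}-2)/3$ from the remaining cases. The bounding step is sound: $nd=2R(n)\ge 2$ forces $d\ge 1$, and $nd=2R(n)<2^{k+1}\le 4n$ forces $d\le 3$. What your route buys is that the ``only if'' direction does not need to guess $A(k)$ in advance --- the candidate values of $n$ fall out of the integrality analysis --- whereas the paper's sandwiching argument must already know the threshold it is comparing against; the paper's version, in exchange, makes the monotone separation of $P(n)/3$ and $T(n)$ on either side of $A(k)$ more visible. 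Your converse computation (the palindrome structure giving $R(A(k))=A(k)$ for $k$ odd and $R(A(k))=A(k)/2=A(k-1)$ for $k$ even) matches the paper's ``if'' direction essentially verbatim.
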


\begin{proof}
First suppose $n = A(k)$ where $k$ is odd.  
Then we know that $$n = \left(2^{k+1}\! -\!1\right)/3,$$ $n$ is $k$ bits long, and $R(n) = n$.   We have
\begin{eqnarray*}
P(n)/3 &=&\left(n2^{k}\! + n\right)/3\\
&=&n\left(2^k\! + 1\right)/3\\
&=&\left(\frac{n}{2}\right)\left(2^{k+1}\! + 2\right)/3\\
&=&\frac{n(n\!+\!1)}{2} = T(n).
\end{eqnarray*}

Now suppose $n = A(k)$ where $k$ is even.  
Then we know that $$n = \left(2^{k+1}\! -\!2\right)/3,$$ $n$ is $k$ bits long, and $R(n) = n/2$.  We have
\begin{eqnarray*}
P(n)/3 &=&\left(n2^{k}\! + n/2\right)/3\\
&=&\left(\frac{n}{2}\right)\left(2^{k+1}\! + 1\right)/3\\
&=&\frac{n(n\!+\!1)}{2} = T(n).
\end{eqnarray*}

Next suppose $2^{k-1}\le n \le A(k)-1$.  Then $n$ is $k$ bits long, and
\begin{align*}
P(n)/3 &= \left(n2^{k} + R(n)\right)/3\\
       &>\left(n2^k\right)/3\\
       &=\left(\frac{n}{2}\right)\left(\frac{2^{k+1}}{3}\right)\\
       &>\left(\frac{n}{2}\right)A(k)\\
       &\ge \frac{n(n+1)}{2} = T(n).
\end{align*}

Finally, suppose $A(k)+1 \le n \le 2^k-1$.  Then $n$ is $k$ bits long, and
\begin{align*}
P(n)/3 &= \left(n2^{k} + R(n)\right)/3\\
       &<\left(n2^k + 2^k \right)/3\\
       &=\left(\frac{n+1}{2}\right)\left(\frac{2^{k+1}}{3}\right)\\
       &<\left(\frac{n+1}{2}\right)\left(A(k)+1\rule{0pt}{10pt}\right)\\
       &\le \frac{(n+1)n}{2} = T(n). 
\end{align*}
\end{proof}

\section{The conjectures of Reinhard Zumkeller}

Reinhard Zumkeller has conjectured that our sequence $A$ is related to the sequence \href{https://oeis.org/A265158}{A265158}
 in the OEIS.  We denote this sequence as $B$ and its $n$th term as $B(n)$.  The sequence is defined by

\medskip
1.  $B(1)= 1$,

2.  $B(2n)= 2B(n)$ for $n\ge 1$, and

3.  $B(2n+1) = \lfloor B(n)/2)\rfloor$ for $n\ge 1$.

\medskip\noindent
The first few values of this sequence are displayed in Table \ref{B}.
\begin{table}[h]
$$\begin{array}{rr|rr|rr|rr}
n&B(n)&\multicolumn{1}{c}n&B(n)&\multicolumn{1}{c}n&B(n)&\multicolumn{1}{c}n&B(n)\rule[-5pt]{0pt}{12pt}\\ \hline
1&1\;&9&2\;&17&4\;&25&0\;\rule{0pt}{11pt}\\
2&2\;&10&2\;&18&4\;&26&0\;\\
3&0\;&11&0\;&19&1\;&27&0\;\\
4&4\;&12&0\;&20&4\;&28&0\;\\
5&1\;&13&0\;&21&1\;&29&0\;\\
6&0\;&14&0\;&22&0\;&30&0\;\\
7&0\;&15&0\;&23&0\;&31&0\;\\
8&8\;&16&16\;&24&0\;&32&32\;
\end{array}$$
\caption{The sequence $B$.}
\label{B}
\end{table}

A notable property of this sequence is that it contains long runs of zeros.  Zumkeller conjectures that the lengths 
of the record runs 
of zeros in sequence $B$ are exactly the values found in sequence $A$.  The run lengths themselves form sequence 
\href{https://oeis.org/A264784}{A264784}, with values
{\bf \color{blue} 1}, {\bf \color{blue} 2}, {\bf \color{blue} 5}, {\bf \color{blue} 10}, 1, 
{\bf \color{blue} 21}, 2, 
{\bf \color{blue} 42}, 1, 1, 5, 1, 
{\bf \color{blue} 85}, 2, 2, 10, 2, {\bf \color{blue} 170}, 1, 1, 1, 5, 1, 1, 5, 1, 21, 1, 1, 5, 1, {\bf \color{blue}341}, 
2, 2, 2, 10, 2, 2, 10, 2, 42, 2, 2, 10, 2, {\bf \color{blue} 682}, 1, 1, 1, 1, 5, 1, 1, 1, 5, 1, 1, 5, 1, 21, 
1, 1, 1, 5, 1, 1, 5, 1, 21, 1, 1, 5, 1, 85, 1, 1, 1, 5, 1, 1, 5, 1, 21, 1, 1, 5, 1, {\bf \color{blue} 1365}, \dots\ .
We will refer to this sequence of run lengths as sequence $R$, with entries $R(n)$.

\newpage
We confirm a few facts about sequence $B$.
\begin{lemma}
$$
B(A(k)) = \left\{\begin{array}{ll}
           1& \mbox{ if $k$ is odd}\rule[-5pt]{0pt}{5pt}\\ 
           2& \mbox{ if $k$ is even}\rule{0pt}{15pt}           
          \end{array}\right.
$$
\end{lemma}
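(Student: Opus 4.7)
\medskip

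\noindent\textbf{Proof proposal.} The plan is a direct induction on $k$, exploiting the fact that both the recurrence defining $A$ and the recurrence defining $B$ respond to the parity of the argument (or the parity of the index). I expect the two recurrences to mesh so cleanly that parity is preserved from line to line, and the two cases will flip into each other at every step.

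The base case is $k=1$: $A(1)=1$ and $B(1)=1$, which matches the ``odd'' branch. For the inductive step I would assume the claim for some $k\ge 1$ and split on the parity of $k$. If $k$ is odd, then $k+1$ is even and Characterization~1 gives $A(k+1)=2A(k)$; the rule $B(2n)=2B(n)$ with $n=A(k)$ then yields
\begin{equation*}
B(A(k+1)) \;=\; B(2A(k)) \;=\; 2B(A(k)) \;=\; 2\cdot 1 \;=\; 2,
\end{equation*}
which is the desired value for even $k+1$. If instead $k$ is even, then $k+1$ is odd and $A(k+1)=2A(k)+1$; the rule $B(2n+1)=\lfloor B(n)/2\rfloor$ with $n=A(k)$ gives
\begin{equation*}
B(A(k+1)) \;=\; B(2A(k)+1) \;=\; \lfloor B(A(k))/2\rfloor \;=\; \lfloor 2/2\rfloor \;=\; 1,
\end{equation*}
which is the desired value for odd $k+1$. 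In both cases the induction closes.

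There is no real obstacle here: the two recurrences for $A$ and the two recurrences for $B$ line up one-to-one, with the ``odd $k$'' case using the doubling rule and the ``even $k$'' case using the $2n+1$ rule. The only thing to check carefully is that the recurrence $A(2m+1)=2A(2m)+1$ is indeed applied with $n=A(k)$ (for $k=2m$) so that the $B$-rule for arguments of the form $2n+1$ is invoked legitimately, which it is. I would present the argument essentially as above, perhaps consolidating both halves into a single paragraph after the base case.
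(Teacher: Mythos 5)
Your proof is correct and follows essentially the same induction as the paper: pair the parity cases of the $A$-recurrence with the two rules defining $B$ and let the values $1$ and $2$ alternate. The only cosmetic difference is that you invoke Characterization~1 directly ($A(k+1)=2A(k)$ or $2A(k)+1$), while the paper routes through the closed forms $(2^{k+1}-2)/3$ and $(2^{k+1}-1)/3$; your version is, if anything, slightly cleaner.
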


\begin{proof}
The proof is by induction on $k$.  For $k=1$ we have $B(A(1)) = B(1) = 1.$  Let $k$ be an even positive integer, and
suppose that the lemma is true for smaller values of $k$. We have
\begin{align*}
                 B(A(k)) &= B\left( (2^{k+1} - 2)/3\right) \\
                         &= 2B\left( (2^k -1)/3\right)\\
                         &=2B\left(A(k\!-\!1\rule{0pt}{10pt})\right)\\
                         &= 2(1) = 2.
\end{align*}
Now let $k\ge 3$ be an odd integer, and suppose that the lemma is true for smaller values of $k$.  We have
\begin{align*}
                 B(A(k)) &= B\left((2^{k+1} - 1)/3\right) \\
                         &= \left\lfloor B\left((2^k-2)/3\right)/2\right\rfloor\\
                         &= \left\lfloor B\left(A(k\!-\!1\rule{0pt}{10pt})\right)/2\right\rfloor\\
                         &= \left\lfloor 2/2 \right\rfloor = 1.
\end{align*}
\end{proof}

\begin{lemma}
$B(A(k) + 1) = 0$ for $k\ge 2$.
\end{lemma}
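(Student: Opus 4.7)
The plan is to split into cases by the parity of $k$, each time rewriting $B(A(k)+1)$ via the recursive definition of $B$ and reducing to a simpler value that is either computed by the preceding lemma or handled by the other case.

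First, for $k \geq 2$ even, the closed form $A(k) = (2^{k+1}-2)/3$ shows $A(k)$ is even, so $A(k)+1$ is odd and the third clause of the recursion gives
$$B(A(k)+1) = \left\lfloor B(A(k)/2)/2 \right\rfloor.$$
I would then check that $A(k)/2 = (2^k-1)/3 = A(k-1)$, using the odd-index closed form for $A(k-1)$. Since $k-1$ is odd, the preceding lemma yields $B(A(k-1)) = 1$, and the floor evaluates to $0$.

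Second, for $k \geq 3$ odd, the closed form $A(k) = (2^{k+1}-1)/3$ shows $A(k)$ is odd, so $A(k)+1$ is even and the second clause of the recursion gives
$$B(A(k)+1) = 2\, B((A(k)+1)/2).$$
I would check that $(A(k)+1)/2 = (2^k+1)/3 = A(k-1)+1$, using the even-index closed form for $A(k-1)$. Since $k-1 \geq 2$ is even, the case already handled gives $B(A(k-1)+1) = 0$, and hence $B(A(k)+1) = 0$.

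The only real obstacle is clerical: verifying the two algebraic identities $A(k)/2 = A(k-1)$ (for $k$ even) and $(A(k)+1)/2 = A(k-1)+1$ (for $k$ odd) from the closed forms. Once these are in hand, the lemma follows from one application of the recursion for $B$ in each case, together with the preceding lemma; no separate induction is required, since the odd-$k$ argument reduces to the even-$k$ case at index $k-1$, which is itself direct.
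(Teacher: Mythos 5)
Your proposal is correct and follows essentially the same route as the paper: the even case reduces via the odd-index clause of the recursion to $B(A(k-1))=1$ from the preceding lemma, and the odd case reduces via the even-index clause to the already-established even case at $k-1$. The algebraic identities you flag ($A(k)/2=A(k-1)$ for $k$ even and $(A(k)+1)/2=A(k-1)+1$ for $k$ odd) are exactly the reductions the paper carries out with the closed forms.
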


\begin{proof}
For $k$ even we have 
\begin{align*}
   B(A(k)+1) &= B\left((2^{k+1} -2)/3+ 1\right)\\
             &= B\left((2^{k+1} + 1)/3\right)\\
             &= \left\lfloor B\left((2^k - 1)/3\right)/2\right\rfloor\\
             &= \left\lfloor B\left(A(k\!-\!1)\rule{0pt}{10pt}\right)/2\right\rfloor\\
             &= \lfloor 1/2\rfloor = 0.
\end{align*}
Once the case $k$ even is established, the case $k$ odd follows.
\begin{align*}
  B(A(k)+1) &= B\left( (2^{k+1}-1)/3 + 1\right)\\
            &= B\left( (2^{k+1} + 2)/3\right)\\
            &= 2B\left((2^k + 1)/3\right)\\
            &= 2B\left(A(k\!-\!1) + 1\rule{0pt}{10pt}\right)\\
            &= 2(0) = 0.
\end{align*}
\end{proof}

Note that Lemmas 4.1 and 4.2 together imply that for all $k\ge 2$ there is a run of zeros in sequence $B$ 
beginning at index $n=A(k)+1$.

\begin{lemma}
$B(2^k) = 2^k$ for all $k\ge 0$.
\end{lemma}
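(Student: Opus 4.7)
The plan is a straightforward induction on $k$, since the recursive definition of $B$ applies cleanly to arguments that are powers of $2$. The base case $k=0$ amounts to observing that $B(2^0) = B(1) = 1 = 2^0$ directly from clause 1 of the definition of $B$.

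For the inductive step, I would fix $k \ge 1$ and assume the claim for $k-1$. The key observation is that $2^k = 2 \cdot 2^{k-1}$, so clause 2 of the definition applies with $n = 2^{k-1}$, giving
\begin{equation*}
B(2^k) \;=\; B(2 \cdot 2^{k-1}) \;=\; 2\, B(2^{k-1}) \;=\; 2 \cdot 2^{k-1} \;=\; 2^k,
\end{equation*}
where the third equality uses the inductive hypothesis. That completes the induction.

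There is really no obstacle here: clause 3 of the recurrence (the one that involves a floor and causes most of the complicated behavior in $B$) never gets triggered because $2^k$ is even for all $k \ge 1$, so only the clean doubling rule is ever applied. This is why the values of $B$ at powers of $2$ form such a simple subsequence compared with the rest of $B$, and it is also why this lemma is much shorter than Lemmas 4.1 and 4.2.
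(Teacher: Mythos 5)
Your proof is correct and is essentially identical to the paper's: both proceed by induction on $k$, using the base case $B(1)=1$ and the doubling rule $B(2n)=2B(n)$ applied with $n=2^{k-1}$. Nothing further is needed.
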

\begin{proof}
The proof is by induction on $k$.  For $k=0$ we have $B(2^0)=B(1) = 1 = 2^0$.  Now assuming the lemma is true for all values
smaller than some $k>0$, we have 
$$B(2^k) = 2B(2^{k-1}) = 2(2^{k-1}) = 2^k.$$
\end{proof}
\begin{theorem}
For every $k\ge 2$ we have $B(n) = 0$ for $A(k) + 1\le n\le 2^{k}-1$.
These values form a run of zeros of length $A(k\!-\!1)$ in $B(n)$, and these are the runs of record length.
\end{theorem}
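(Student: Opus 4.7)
The plan is to split the claim in two. First, show directly that $\{A(k)+1, \ldots, 2^k-1\}$ is a maximal zero run of length $A(k-1)$ in $B$. Second, show by strong induction that no earlier zero run in $B$ is this long. The length equals $2^k - 1 - A(k) = A(k-1)$ by Characterization 3, and $B(A(k)) \in \{1,2\}$ (Lemma 4.1) together with $B(2^k) = 2^k$ (Lemma 4.3) are both nonzero, so the run is maximal once its interior zeros are established. The interior claim $B(n) = 0$ for $A(k)+1 \le n \le 2^k - 1$ follows by induction on $k$: the base case $k = 2$ is $B(3) = 0$ (Lemma 4.2). For the inductive step, setting $m = \lfloor n/2 \rfloor$, a parity check shows that $m$ either lies in $[A(k-1)+1, 2^{k-1} - 1]$, so $B(m) = 0$ by the inductive hypothesis, or, in the single subcase where $k$ is even and $n = 2A(k-1)+1$, equals $A(k-1)$ with $B(A(k-1)) = 1$ by Lemma 4.1 (since $k-1$ is then odd); in both subcases the recursion for $B$ forces $B(n) = 0$.

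For the record part, I strengthen the inductive hypothesis to $P(k)$: the longest zero run of $B$ in $[1, 2^k - 1]$ has length $A(k-1)$ and occurs uniquely at $[A(k)+1, 2^k - 1]$. Since $B(2^j) \ne 0$ for $j \ge 0$ (Lemma 4.3), no zero run crosses a power of $2$, so every run has constant bit-length, and those of bit-length below $k$ are controlled by $P(k-1)$. For bit-length $k$, I classify maximal zero runs via the pair identity $(B(2m), B(2m+1)) = (2B(m), \lfloor B(m)/2 \rfloor)$. Labelling $m$ at bit-length $k-1$ as type $Z$, $L$, or $H$ according to whether $B(m) = 0$, $B(m) = 1$, or $B(m) \ge 2$, the pair is $(0,0)$, $(2,0)$, or has both entries positive, respectively. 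Consequently, each maximal zero run at bit-length $k$ is either the doubled image $[2m_0, 2m_1+1]$ of a maximal $Z$-block $[m_0, m_1]$ at bit-length $k-1$ whose predecessor is of type $H$ (length $2(m_1 - m_0 + 1)$), or it starts with the zero contributed by an $L$ at some $p$ and continues through the doubled image of a (possibly empty) following $Z$-block $[p+1, p+\ell]$ (length $1 + 2\ell$). By $P(k-1)$, the unique record $Z$-block at bit-length $k-1$ is $[A(k-1)+1, 2^{k-1}-1]$ of length $A(k-2)$, and Lemma 4.1 identifies its predecessor $A(k-1)$ as type $L$ when $k-1$ is odd and type $H$ when $k-1$ is even. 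The corresponding zero run at bit-length $k$ then has length $1 + 2A(k-2) = A(k-1)$ in the odd subcase and $2A(k-2) = A(k-1)$ in the even subcase, both located at $[A(k)+1, 2^k-1]$. Any other maximal zero run at bit-length $k$ either uses a non-record $Z$-block (length $\le A(k-2) - 1$), yielding length at most $2A(k-2) - 1$, or an isolated $L$, yielding length $1$; both bounds are strictly less than $A(k-1)$, establishing $P(k)$.

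The main obstacle is the combinatorial classification in the record step: carefully distinguishing the two forms of maximal zero runs at bit-length $k$ in terms of the $H$/$L$/$Z$ structure at bit-length $k-1$, and tracking through the parity of $k-1$ (via Lemma 4.1) whether the unique record $Z$-block contributes to bit-length $k$ via the ``doubled'' form or via the ``$L$-extended'' form. Once this classification is in hand, the length arithmetic collapses cleanly onto the defining recursion for $A(k)$.
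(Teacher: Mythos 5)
Your first half --- the induction showing $B(n)=0$ on $[A(k)+1,\,2^k-1]$, the length computation $2^k-1-A(k)=A(k-1)$, and the maximality via $B(A(k))\in\{1,2\}$ and $B(2^k)=2^k$ --- is correct and is essentially the paper's argument (your parity subcase with $m=A(k-1)$ is just Lemma 4.2 re-derived inline). For the record claim, however, your argument has a genuine gap. Your strengthened hypothesis $P(k)$ says the end-run $[A(k)+1,2^k-1]$ is the \emph{unique longest} run among all runs in $[1,2^k-1]$; since it is also the last such run, this does show every end-run is a record. But the theorem asserts these are \emph{precisely} the record runs, and $P(k)$ does not rule out an intermediate run at bit-length $k$ being a record: to exclude that, you must show every non-end run at bit-length $k$ has length at most $A(k-2)$, the previous record (set by the end-run at bit-length $k-1$). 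Your $H/L/Z$ classification only yields the bound $2A(k-2)-1$ for such runs, and $2A(k-2)-1 > A(k-2)$, so nothing you have proved prevents an intermediate run of length, say, $A(k-2)+3$ from appearing and being a new record. The bound you derived is exactly strong enough for uniqueness within $[1,2^k-1]$ and exactly too weak for the ``no other records'' direction.

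The gap is fixable, and the paper's own route is much shorter: since $B(2^j)>0$, every run lies within a single bit-length block $[2^{k-1},2^k-1]$, and since $B(A(k))\neq 0$, every non-end run at bit-length $k$ is contained in $[2^{k-1},\,A(k)-1]$, an interval with exactly $A(k)-2^{k-1}=A(k-2)$ integers by Characterization 4. Hence non-end runs have length at most $A(k-2)$ and can never beat the previous record. (Alternatively, you could stay inside your framework and run a second induction on $M(k)$, the longest non-end run at bit-length $k$, using your recursion $M(k)\le 2M(k-1)+1$ with $M(5)=0$; this gives $M(k)\le A(k-5)<A(k-2)$.) Your $H/L/Z$ pair analysis is more machinery than the theorem needs --- it essentially anticipates the Catalan-word structure the paper develops afterward in Lemma 4.7 --- but on its own it does not close the converse direction of the record claim.
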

\begin{proof}
We again use induction on $k$.  For $k=2$ we consider $n$ such that $A(2) + 1 = 3 \le n \le 3 = 2^2-1$, or $n=3$.  Now
$B(3)=0$ so the claim is true in this case.  We now consider the claim for some $k>2,$ and assume the claim is true for
smaller values of $k$.
We know from Lemma 4.2 that $B(n)=0$ for $n= A(k)+1$, so we need consider only those $n$ such that $A(k)+2\le n\le 2^k-1$.
If $n$ is even, this implies $A(k\!-\!1) + 1  \le n/2 \le 2^{k-1}-1$, so by the induction hypothesis we have
\begin{align*}
B(n) &= 2B(n/2)\\
     &= 2(0) = 0.
\end{align*}
If $n$ is odd, it can still be shown that $A(k\!-\!1) + 1 \le (n\!-\!1)/2 \le 2^{k-1}-1$, so the induction hypothesis yields
\begin{align*}
B(n) &= \left\lfloor B\left((n\!-\!1)/2\rule{0pt}{10pt}\right)/2\right\rfloor\\
     &= \lfloor 0/2\rfloor = 0.
\end{align*}

For all $k\ge 2$, then, we have a run of zeros in sequence $B(n)$ starting at $A(k) + 1$ and ending at $2^k - 1$,
of length $\left(2^k\!-\!1\right) - A(k) = A(k\!-\!1)$.  That these are precisely the runs of record length follows from the fact that $B(2^k)>0$ for all $k\ge 0$ and that each of these runs covers two-thirds of the entries between consecutive powers of 2.
\end{proof}

\medskip
We can say more about the number and lengths of runs of zeros in $B$ if we write the index $n$ in binary.  Using $\omega$ to
denote an arbitrary word, or string, over the alphabet $\mathcal{A} = \{0,1\}$, we can restate the definition of $B(n)$ as follows:  

\medskip
1.  $B(1)=1$,

2.  $B\left(\omega0_{(2)}\right) = 2B\left(\omega_{(2)}\right)$\rule[-8pt]{0pt}{24pt}, and

3.  $B\left(\omega1_{(2)}\right) = \left\lfloor B\left(\omega_{(2)}\right)/2)\right\rfloor$.

\medskip
With this characterization, we can compute the value of $B(n)$ by reading the bits of the binary form of the index $n$ from left
to right.  The initial 1 in the index tells us to start with an initial value of 1. Each time we encounter the bit 0 we double
our current value;  each time we encounter a 1 we halve the current value, unless the current value is 1, in which case the
value becomes and remains 0.  For example, suppose $n = 18 = 10010_{(2)}$.  The left-most bit is 1, and we start with an
initial value of 1; $B(1)=1$.  The next bit is 0, so we double our current value to 2; $B(10_{(2)})=2$.  The next bit is again 0,
so we again double our current value to 4;  $B(100_{(2)})=4$.  The next bit is 1, so we halve our current value back to 2;
$B(1001_{(2)})=2$.  The final bit is again 0, so our current value is doubled to our final value of 4;  $B(10010_{(2)})=4$.  On
the other hand, for $n=22=10110_{(2)}$, our current value starts at 1 and progresses to 2, back to 1, and then to 0, where it
remains; $B(1011\omega_{(2)})= 0$ for any bit string $\omega$.

The above observation provides us with a useful way of identifying the indices where $B(n)=0$.
\begin{lemma}
$B(n)=0$  if and only if, when reading the bits of the binary representation of $n$ from left to right, 
we at some
point have encountered more ones than zeros, not counting the initial 1.
\end{lemma}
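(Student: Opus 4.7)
The plan is to prove by induction on the length of the binary representation of $n$ a strengthened claim that also computes $B(n)$ explicitly when it is nonzero. Write $n$ in binary as $1 b_1 b_2 \cdots b_m$, and for each $0 \le i \le m$ let $z_i$ and $o_i$ denote the number of zeros and ones, respectively, among $b_1, \dots, b_i$. I would assert
\[
B(n) \;=\; \begin{cases} 2^{z_m - o_m}, & \text{if } z_i \ge o_i \text{ for every } 0 \le i \le m,\\ 0, & \text{otherwise,}\end{cases}
\]
from which the lemma is the immediate special case asking when $B(n) = 0$.

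The induction uses the restated recurrence from the paragraph immediately preceding the lemma: $B(1) = 1$, $B(\omega 0_{(2)}) = 2 B(\omega_{(2)})$, and $B(\omega 1_{(2)}) = \lfloor B(\omega_{(2)})/2 \rfloor$. The base case $m = 0$ gives $B(1) = 1 = 2^0$. For the inductive step, apply the hypothesis to $n' = 1 b_1 \cdots b_{m-1}$ and split on $b_m$. If $b_m = 0$, doubling sends $2^{z_{m-1}-o_{m-1}}$ to $2^{z_m - o_m}$ and sends $0$ to $0$, while $z_m \ge o_m$ inherits from $z_{m-1} \ge o_{m-1}$. If $b_m = 1$ and $z_{m-1} > o_{m-1}$, the floor-halving gives $2^{z_m - o_m}$ with $z_m \ge o_m$ still holding; if $z_{m-1} = o_{m-1}$, then $B(n') = 1$ halves to $0$, which agrees with the ``otherwise'' branch since $o_m$ now exceeds $z_m$; and if $B(n') = 0$ already, the floor preserves $0$.

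The main obstacle is the subtle borderline case $z_{m-1} = o_{m-1}$ with $b_m = 1$, which is the precise moment at which $B$ first drops to zero. The inductive hypothesis has to be strong enough to know that $B(n') = 1$ at that instant, which is exactly why I track the exact value $2^{z - o}$ rather than merely the vanishing of $B$. Once that case is handled, the doubling and floor-halving rules both fix $0$, so the value remains $0$ through the rest of the bits, matching the biconditional claimed in the lemma.
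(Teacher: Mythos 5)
Your proof is correct and follows essentially the same route as the paper: the paper justifies this lemma informally via the observation that reading the bits left to right doubles the running value on each 0 and floor-halves it on each 1, so the value first hits 0 exactly when the ones outnumber the zeros. You have simply made that observation rigorous by an induction that tracks the exact running value $2^{z_i-o_i}$, which correctly handles the borderline case where the value 1 is halved to 0.
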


We will call a word $\omega$ of length $2n$ over ${\mathcal A}$ a {\it Catalan} word if it consists of $n$ ones and $n$ zeros, and at
no point, reading from left to right, does it contain more ones than zeros.  The first few Catalan words are $\lambda$ (the null
word), 01, 0011, 0101, 000111, 001011, 001101, 010011, and 010101.  It is well known that the number of Catalan words of
length $2n$ is the $n$th Catalan number $\displaystyle{\binom{2n}{n}}/(n+1)\rule[-8pt]{0pt}{8pt}$ for all $n\ge 0$.  These Catalan numbers, which we denote 
$C(n)$, form sequence \href{https://oeis.org/A000108}{A000108} in the OEIS.

\begin{lemma}  There is a run of zeros in sequence $B$ 
beginning at index $n$ if any only if the binary representation of $n$ has the form
$1\omega1_{(2)}$ or $1\omega10_{(2)}$, where $\omega$ is a $($perhaps null$)$ Catalan word.
\end{lemma}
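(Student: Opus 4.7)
The plan is to leverage Lemma 4.4: a run of zeros begins at index $n$ iff $B(n)=0$ and $B(n-1)\ne 0$, and both conditions are determined by whether the running tally of 1's minus 0's ever becomes positive as the bits of the binary representation are read left to right (not counting the initial 1). I would then split on the parity of $n$ and, in each case, rewrite the binary representations of $n$ and $n-1$ around a common prefix~$\omega$.

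For the odd case, I would write $n=1\omega 1_{(2)}$ so that $n-1=1\omega 0_{(2)}$. Since appending a trailing $0$ can only widen the tally gap in favor of $0$'s, the condition $B(n-1)\ne 0$ is equivalent to saying that no prefix of $\omega$ (including $\omega$ itself) has more 1's than 0's. Given that, the additional requirement $B(n)=0$ forces the tally, which has been nonpositive throughout $\omega$, to become positive upon reading the final $1$; this happens precisely when $\omega$ has equal numbers of 1's and 0's. Together these conditions say exactly that $\omega$ is a Catalan word, and the reverse direction is immediate from Lemma 4.4.

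For the even case, I would write $n=1\alpha 10^j_{(2)}$ with $j\ge 1$ and $\alpha$ possibly null; powers of $2$ are excluded because Lemma 4.3 gives $B(2^k)=2^k>0$. Then $n-1=1\alpha 01^j_{(2)}$, the borrow from subtracting $1$ being absorbed by the rightmost $1$ of $n$. Let $d$ denote the count of 1's minus 0's in $\alpha$. The condition $B(n-1)\ne 0$ forces the tally to stay nonpositive on every prefix of $\alpha 01^j$, which applied to the full word yields $d\le 1-j$. The condition $B(n)=0$ requires some prefix of $\alpha 10^j$ to have positive tally; since no prefix of $\alpha$ qualifies and the tally of $\alpha 10^k$ is maximized at $k=0$ with value $d+1$, this forces $d\ge 0$. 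Combining, $j=1$ and $d=0$, so $\alpha$ is a Catalan word and $n=1\omega 10_{(2)}$ with $\omega=\alpha$. The reverse direction is again a direct check via Lemma 4.4.

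The only delicate step is the tally bookkeeping in the even case, where one must simultaneously rule out $j\ge 2$ and pin down the counts in $\alpha$; the remaining work is a careful but routine application of Lemma 4.4.
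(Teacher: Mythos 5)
Your proof is correct, and it rests on the same foundations as the paper's: the tally characterization of $B(n)=0$ (Lemma 4.6) plus the observation that a run begins at $n$ exactly when $B(n)=0$ and $B(n-1)\neq 0$. The difference is in how the converse is organized. The paper factors an arbitrary $n$ with $B(n)=0$ as $1\omega1\upsilon_{(2)}$, with $\omega$ the Catalan prefix ending at the first point where the tally goes positive, and then checks in three cases on the suffix $\upsilon$ (a single one; length at least $2$ and not all zeros; length at least $2$ and all zeros) that $B(n-1)=0$ as well, so that no run can begin at such an $n$. You instead factor $n$ by parity and by its trailing zeros, as $1\omega1_{(2)}$ or $1\alpha10^{j}_{(2)}$, and solve the two inequalities coming from $B(n)=0$ and $B(n-1)\neq 0$ to force $d=0$ and $j=1$; your bookkeeping there ($d\le 1-j$ from the prefixes of $\alpha01^{j}$, $d\ge 0$ from the prefix $\alpha1$) is right. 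Your route makes exhaustiveness automatic, since every non-power-of-two $n>1$ admits exactly one such factorization, whereas the paper's three cases implicitly use the (true but unstated) fact that any $n$ with $B(n)=0$ can be written as $1\omega1\upsilon_{(2)}$ with $\omega$ Catalan; the paper's route, in exchange, exhibits explicitly which neighboring index already carries a zero. Either way the content is the same and your argument is sound.
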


According to this lemma, the first few indices where runs of zeros begin are $11_{(2)} = 3$,\quad $110_{(2)} = 6$,\quad
$1011_{(2)}= 11$,\quad $10110_{(2)}= 22$,\quad $100111_{(2)}= 39$, and $1001110_{(2)}=78$.  This sequence is not currently in the OEIS.

\begin{proof}
Suppose $n = 1\omega1_{(2)}$, with $\omega$ a Catalan word.  
Then $n\!-\!1 = 1\omega0_{(2)}$.  By Lemma 4.6 we have $B(n)=0$ and $B(n\!-\!1) \neq 0$.   Likewise, if 
$n = 1\omega10_{(2)}$ then 
$n\!-\!1 = 1\omega01_{(2)}$, and again we have $B(n)=0$ and $B(n\!-\!1) \neq 0$.  In both cases, then,
a run of zeros starts at index $n$.

Now suppose $n$ is an index where $B(n)=0$ but $n$ is not of the above form.  There are three cases to consider.

\medskip
  1.  $n= 1\omega11_{(2)}$ where $\omega$ is a Catalan word.  Then $n\!-\!1 = 1\omega10_{(2)}$, so $B(n\!-\!1)=0$.

\smallskip 2.  $n=1\omega1\upsilon_{(2)}$ where $\omega$ is a Catalan word and $\upsilon$ is a word of length at least 2, not all zeros.
Then $n\!-\!1= 1\omega1\upsilon^{\prime}_{(2)}$, where $\upsilon^{\prime}_{(2)}= \upsilon_{(2)}-1$.  Again, $B(n\!-\!1) = 0$.

\smallskip3.  $n=1\omega1\upsilon_{(2)}$ where $\omega$ is a Catalan word and $\upsilon$ is a string of zeros of length at least 2.  Then
$n\!-\!1 = 1\omega0\upsilon^{\prime}_{(2)}$ where $\upsilon^{\prime}$ is a string of ones of length at least 2.  
Once again, $B(n\!-\!1) = 0$.

\smallskip In all three cases, then, $B(n)= 0$ but is not the start of a run of zeros.
\end{proof}

We conclude that for $2^{2k-1} \le n <2^{2k}$, where the binary representation of $n$ contains $2k$ bits, there are exactly
$C(k-1)$ runs of zeros in sequence $B$, corresponding to the $C(k-1)$ Catalan sequences of length $2k-2$.
The longest run of zeros in this range is the last one, of length $A(k)$.  There are an
additional $C(k-1)$ runs when $2^{2k}\le n < 2^{2k+1}$ and $n$ is $2k+1$ bits long.  The longest run here is also the last one, 
of length $A(k+1)$.

The record-length runs of zeros in $B$ are thus runs number $C(0)=1$, $2C(0) = 2$, $2C(0)+C(1) = 3$, $2C(0)+2C(1) = 4$,
$2C(0)+2C(1)+ C(2) = 6$, $2C(0) + 2C(1) + 2C(2) = 8$,  $2C(0)+2C(1)+2C(2) +C(3)=13$, $2C(0)+2C(1)+2C(3)+2C(4) = 18$, and
so on.  These numbers form sequence \href{https://oeis.org/A155051}{A155051} in the OEIS, starting with $A155051(0)=1$.  This implies the second form of 
Zumkeller's conjecture.

\begin{theorem}
The $n$th record-length run of zeros in sequence $B$ has length $A(n)$ and is run number $A155051(n\!-\!1)$.  More concisely,
$A(n) = R(A155051(n\!-\!1))$.
\end{theorem}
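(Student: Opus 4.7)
The plan is to deduce this theorem directly from Theorem 4.4 and Lemma 4.7, which already carry all the structural content. First I would handle the length assertion. Theorem 4.4 states that for each $k\ge 2$ the indices $A(k)+1,\ldots,2^k-1$ form a run of zeros of length $A(k-1)$ in $B$, and that these are precisely the record-length runs of $B$. Reindexing by $n=k-1$, the $n$th record-length run therefore has length $A(n)$, which is the first half of the theorem.

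For the run-number assertion, I would count run-starts one binary length at a time using Lemma 4.7. That lemma characterizes a run-start as an index of shape $1\omega1_{(2)}$ or $1\omega10_{(2)}$ with $\omega$ a (possibly null) Catalan word. Because Catalan words have even length, a run-start of binary length $m$ must either be $1\omega1_{(2)}$ with $|\omega|=m-2$ (forcing $m$ even) or $1\omega10_{(2)}$ with $|\omega|=m-3$ (forcing $m$ odd); in either case the count is $C(\lfloor(m-2)/2\rfloor)$. By Theorem 4.4, the $n$th record-length run is exactly the last run whose starting index has binary length $n+1$, so its position in the overall list of runs of zeros is the partial sum $\sum_{m=2}^{n+1} C(\lfloor(m-2)/2\rfloor)$.

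To close the loop with $A155051$, I would verify that this partial-sum sequence $1,2,3,4,6,8,13,18,\ldots$ matches the OEIS entry $A155051$ under the offset $n-1$, as already tabulated in the discussion preceding the theorem. Concretely, the base value $A155051(0)=1$ handles $n=1$, and the increments $A155051(m)-A155051(m-1)=C(\lfloor m/2\rfloor)$ agree with the count of new run-starts contributed by each successive binary-length range. Combined with the length calculation above, this yields $A(n)=R(A155051(n-1))$ as stated.

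The only genuine obstacle is bookkeeping: keeping straight the three indexing conventions (binary length $k$, record index $n$, and OEIS offset $n-1$), and making sure that run-starts are partitioned cleanly among the binary-length ranges with no run straddling the boundary $2^{k}$. No nontrivial estimates appear, since Theorem 4.4 already guarantees that the final run in each binary-length range is strictly longer than every preceding run, so the enumeration of record-breakers reduces to the enumeration of these final runs.
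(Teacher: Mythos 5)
Your proposal is correct and follows essentially the same route as the paper: the paper likewise reads off the length claim from Theorem 4.4 and then uses Lemma 4.7 to count $C(k-1)$ run-starts among the $2k$-bit indices and another $C(k-1)$ among the $(2k+1)$-bit indices, identifying the partial sums $1,2,3,4,6,8,13,18,\dots$ with A155051. Your unified count $C(\lfloor(m-2)/2\rfloor)$ per binary length $m$ and your explicit remarks about runs not straddling powers of $2$ are just a slightly tidier write-up of the same argument.
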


%
%
%
%
%

\section{The Question of N. J. A. Sloane}

Continuing in the OEIS listing for sequence $A$ (sequence A0000975) there is a link to an electronic paper 
\href{http://arxiv.org/abs/1310.4521}{Enveloping Operads and Bicoloured Noncrossing Configuration}                            \cite{CG} by F. Chapoton
and S. Giraudo.  In Table 2 of that paper the sequence 1, 2, 5, 10, 21, 42, 85 is listed twice.  In the OEIS Sloane asks
``Is the sequence in Table 2 this sequence [i.e., sequence $A$]?''.  In this section we demonstrate that the answer to 
this question is ``yes.''

  In the somewhat peculiar language of \cite{CG}, these numbers are the first few coefficients of the colored Hilbert
series giving the number of  bubbles of increasing arity, first based and then nonbased, of the 2-colored suboperad 
$\langle\langle{\color{blue} \mathbf \Delta}, {\mathbf \Delta}\rangle\rangle$ of the
2-colored operad {\sf Bubble} generated
by these two generators of arity 2.

In more simple languge, a {\it bubble} is a polygon consisting of a base edge and two or more nonbase edges, 
with each edge either colored blue or left uncolored.  The bubble is {\it based} if and only if the base edge is blue.  
The {\it border} of a bubble is
its set of nonbase edges. The {\it arity} of a bubble is the number of edges in its border.

The based bubbles in the 2-colored suboperad of interest start with a triangle of blue edges.  New bubbles are generated by either
replacing a blue border edge with two consecutive uncolored edges, or by replacing an uncolored border edge with two
consecutive blue edges.  Unbased bubbles in this suboperad are the same, but with blue and uncolored interchanged. 
See Figure \ref{bubble} for the based bubbles of arity 2, 3, and 4 in this suboperad.

\begin{figure}[h!]
\centerline{\includegraphics[width=4.93in,height=2.42in,keepaspectratio]{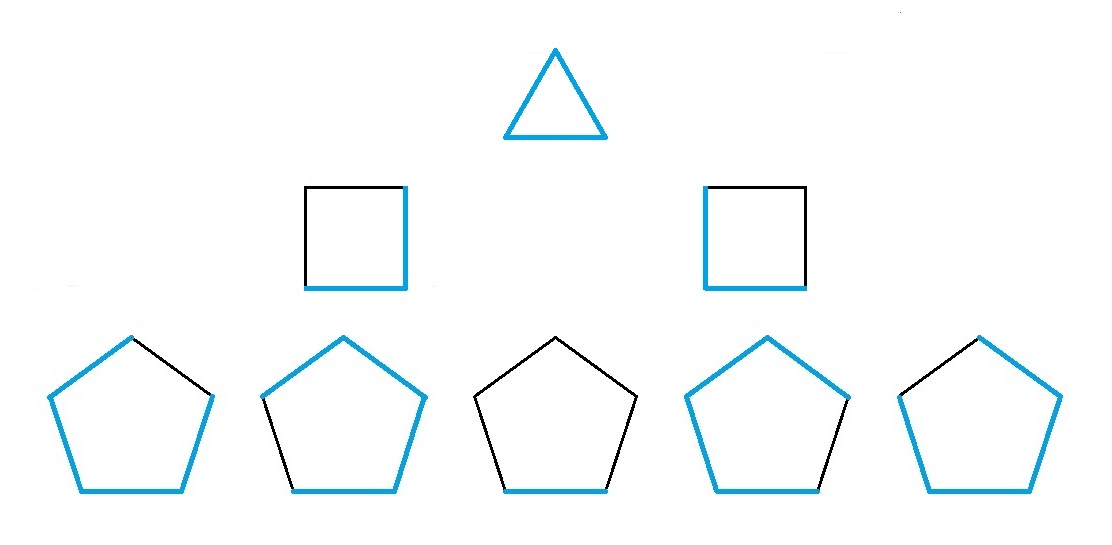}}
\caption{Based bubbles of arity 2, 3, and 4.}
\label{bubble}
\end{figure}

These based bubbles of arity $n$ are characterized in \cite{CG} as those with the following properties.
\begin{enumerate}
\item the number of blue edges in the border is congruent to $2n\!+\!1 \pmod{3}$ (and the number of uncolored edges in
the border is thus congruent to $2n\!-\!1 \pmod{3}$); and
\item  there exist 2 consecutive edges in the border that are either both blue or both uncolored.
\end{enumerate}
In addition,   the paper \cite{CG} gives 2-variable generating functions (or colored Hilbert series) for based and unbased bubbles in this suboperad.  Unfortunately these generating functions are flawed:  the numerator of the second term in the first generating function should be $z_1$,
 not $z_2$, and the numerator of the second term in the second generating function should be $z_2$, not $z_1$.  We now
produce our own count of these bubbles.

\begin{lemma}
Let $S(n)$ denote the number of strings of length $n$ over the alphablet $\{b,u\}$ such that the number of occurrances of the character $b$  is congruent to $2n\!+\!1\pmod{3}$.  Then $S(n) + S(n\!+\!1) = 2^n$.  Exactly one of these strings consists of alternating $b$ characters and $u$ characters.
\end{lemma}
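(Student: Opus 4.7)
The plan is to partition binary strings over $\{b,u\}$ by the residue of their $b$-count modulo $3$, so that $S(n)$ becomes one of three explicit counts for which a simple recurrence is available. Let $a_n, b_n, c_n$ denote the number of length-$n$ strings whose $b$-count is congruent to $0$, $1$, $2$ modulo $3$, respectively; clearly $a_n + b_n + c_n = 2^n$. Appending a $u$ preserves the $b$-count, while appending a $b$ increments it by one, so these counts obey
\begin{align*}
a_{n+1} &= a_n + c_n,\\
b_{n+1} &= b_n + a_n,\\
c_{n+1} &= c_n + b_n.
\end{align*}

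To prove the first identity I would observe that $S(n)$ equals whichever of $a_n, b_n, c_n$ matches the residue $2n+1 \pmod 3$, and $S(n+1)$ equals whichever of $a_{n+1}, b_{n+1}, c_{n+1}$ matches $2n+3 \pmod 3$. Running through the three residues of $n$ modulo $3$, the appropriate recurrence above always expresses $S(n+1)$ as the sum of the two counts among $\{a_n, b_n, c_n\}$ other than $S(n)$. Consequently $S(n+1) = 2^n - S(n)$, which is exactly $S(n) + S(n+1) = 2^n$.

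For the second assertion I would enumerate the four candidate alternating strings: the two of length $n$ and the two of length $n+1$. Write $\alpha(m) = \lceil m/2 \rceil$ and $\beta(m) = \lfloor m/2 \rfloor$ for the $b$-counts of the $b$-initial and $u$-initial alternating strings of length $m$. Multiplying the condition $\#b \equiv 2m+1 \pmod 3$ by $2$ (which is invertible mod $3$) turns it into the equivalent condition $2\#b - m \equiv 2 \pmod 3$. A direct parity check shows that if $m$ is even then $2\alpha(m) - m = 2\beta(m) - m = 0$, so no alternating string of even length qualifies, whereas if $m$ is odd then $2\alpha(m) - m = 1$ and $2\beta(m) - m = -1 \equiv 2 \pmod 3$, so exactly the $u$-initial one qualifies. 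Since exactly one of $n$ and $n+1$ is odd, precisely one of the four alternating strings lies in the set counted by $S(n) + S(n+1)$.

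The only real obstacle is the bookkeeping across the three residues of $n \pmod 3$ in the first part, which is entirely routine case-checking; once the congruence has been rewritten, the second part reduces to a single parity observation.
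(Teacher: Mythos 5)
Your proof is correct. For the identity $S(n)+S(n+1)=2^n$ you take a slightly different route from the paper: the paper writes both quantities as sums of binomial coefficients $\sum_i\binom{n}{3i+2n+1}+\sum_i\binom{n+1}{3i+2n+3}$, applies Pascal's rule to the second sum, and observes that the resulting terms sweep out every residue class of the lower index mod $3$, hence sum to $2^n$. Your version partitions all $2^n$ strings into the three residue classes $a_n,b_n,c_n$, derives the append-a-letter recurrences, and checks that $S(n+1)$ is always the sum of the two classes complementary to $S(n)$. These are really two presentations of the same combinatorial fact — Pascal's rule is the binomial shadow of your ``append $b$ or $u$'' recurrence — but yours makes the complementation $S(n+1)=2^n-S(n)$ explicit, which is arguably closer to how the lemma is actually used (Characterization 3 of the sequence $A$), while the paper's is a one-line summation manipulation. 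Your three-residue case check is asserted rather than written out, but it is routine and verifies easily. For the uniqueness of the alternating string, both arguments are essentially the same parity computation: even-length alternating strings fail the congruence and, of the two odd-length candidates, only the $u$-initial one satisfies it; your reformulation via $2\#b-m\equiv 2\pmod 3$ is just a tidier bookkeeping device for the same check the paper does with $k$ and $k+1$.
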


\begin{proof}
Clearly the number of strings of length $n$ with $k$ occurrences of the character $b$ is ${\displaystyle \binom{n}{k}}$. So we have
\begin{align*}
S(n) + S(n\!+\!1) &= \sum_i\binom{n}{3i+2n+1} + \sum_i\binom{n+1}{3i+2(n+1) + 1}\\
              &=\sum_i\binom{n}{3i+2n+1} + \sum_i\left(\binom{n}{3i+2n+2}+\binom{n}{3i+2n+3}\right)\\
              &= \sum_i\binom{n}{i} = 2^n.
\end{align*}
The restrictions on the number of $b$ characters and $u$ characters
in a string imply that these counts cannot be equal, so an alternating string
must be of odd length.  If $n= 2k\!+\!1$ the number of $b$ characters must be congruent to 
$2(2k\!+\!1)+1 \equiv k \pmod{3}$ and the 
number
of $u$ characters must be congruent to $2(2k\!+\!1)-1 \equiv k+1\pmod{3}$, so there is a unique alternating string of length 
$n$, consisting of
 $k$ occurrences of the character $b$ and $k\!+\!1$ occurrences of the character $u$. 
\end{proof}

\begin{theorem}
The number of based $($or, equivalently, unbased$)$ bubbles of arity $n$ in the indicated suboperad of bubbles is $A(n\!-\!1)$.
\end{theorem}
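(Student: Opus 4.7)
My plan is to encode each based bubble as a binary string by reading its border in order, then count via Lemma~5.1. A based bubble of arity $n$ corresponds to a string of length $n$ over $\{b,u\}$, and the two characterizing properties stated above translate to: (i) the number of $b$'s is $\equiv 2n+1 \pmod{3}$; and (ii) some two consecutive letters agree, i.e., the string is not strictly alternating. Writing $N(n)$ for the number of based bubbles of arity $n$, we therefore have $N(n) = S(n)$ minus the number of strictly alternating strings of length $n$ whose $b$-count satisfies~(i).

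The second half of Lemma~5.1 shows that exactly one such alternating string exists when $n$ is odd, and a direct parity check for $n$ even confirms there is none in that case (since an alternating string of length $2k$ has $k$ of each letter, forcing $k \equiv k+1 \pmod 3$). Consequently $N(n) = S(n) - 1$ for odd $n$ and $N(n) = S(n)$ for even $n$. Adding consecutive terms and applying the first half of Lemma~5.1 then gives
\[
N(n) + N(n-1) \;=\; S(n) + S(n-1) - 1 \;=\; 2^{n-1} - 1
\]
for every $n \ge 3$, since exactly one of $n$ and $n-1$ is odd so the $-1$ correction is applied exactly once.

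By Characterization~3 of sequence~$A$ we also have $A(n-1) + A(n-2) = 2^{n-1} - 1$, so $N(n)$ and $A(n-1)$ satisfy the same two-term recurrence. An induction starting from the base case $N(2) = 1 = A(1)$ (the unique valid length-$2$ string is $bb$) then yields $N(n) = A(n-1)$ for every $n \ge 2$. Finally, for unbased bubbles the color swap $b \leftrightarrow u$ is an arity-preserving bijection onto based bubbles, so the count is identical. The only delicate point is the parity bookkeeping for the $-1$ correction, but Lemma~5.1 already pins down exactly when it is to be applied.
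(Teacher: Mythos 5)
Your proposal is correct and follows essentially the same route as the paper: identify based bubbles with the strings of Lemma 5.1 minus the alternating ones, use $S(n)+S(n+1)=2^n$ together with the uniqueness of the alternating string to get the two-term recurrence $N(n)+N(n+1)=2^n-1$, and match it against Characterization 3 with base case $N(2)=1=A(1)$. Your version just spells out the parity bookkeeping for the $-1$ correction a little more explicitly than the paper does.
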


\begin{proof}
Every based bubble in our suboperad can be constructed from a blue base and a border colored according to a string described
in Lemma 4.  There is one based bubble of arity 2, and the number of based bubbles of arity $n$ plus the number of based
bubbles of arity $n\!+\!1$ is $2^n-1$.  Thus the number of based bubbles satisfies Characterization 3 from Section 1.

The result for unbased bubbles follows by interchanging blue and uncolored in all the proofs.
\end{proof}

We conclude by illustrating an alternative approach to this counting problem: 
a one-to-one correspondence between the number of partitions into affinity groups described in
Section 2 and the based bubbles described here. To construct a bubble from an affinity group partition, we 
traverse the diagram of the affinity group partition counter-clockwise. If an A is followed by a B, a B by a C, or a C by an A, we
color the corresponding edge of the bubble blue. If an A is followed by a C, or a B by an A, or a C by a B, we leave the
corresponding edge of the bubble uncolored.  The correspondence between the 10 affinity group partitions for 6 people and the
10 bubbles of arity 5 is illustrated in Figure \ref{corr}.

\begin{figure}[h]
\centerline{\includegraphics[width=5.02in,height=3.85in,keepaspectratio]{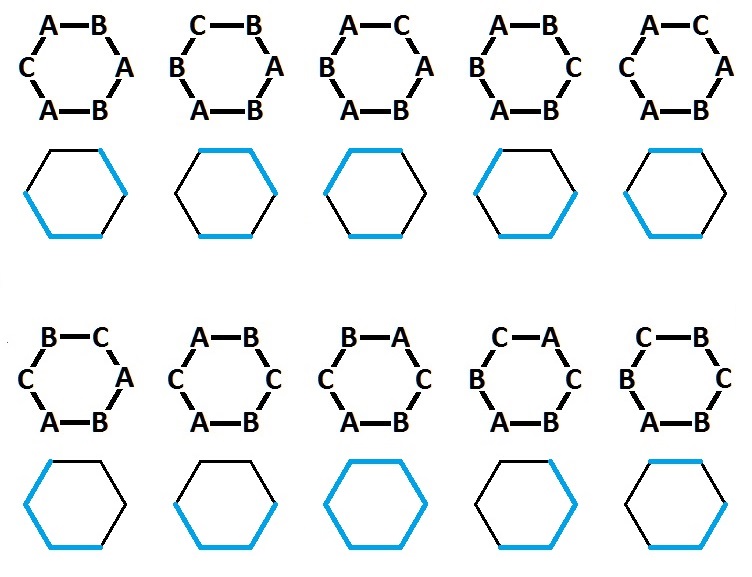}}
\caption{A one-to-one correspondence.}
\label{corr}
\end{figure}

The confirmation that this is indeed always a one-to-one correspondence is left to the reader.

\noindent\hrulefill

\footnotesize\noindent 2010  {\it Mathematics Subject Classification}: Primary 05A10, Secondary 11B37.

\footnotesize\noindent{\it Keywords}: Chinese Rings, baguenaudier, triangle numbers, palindromes, operads.

\begin{thebibliography}{9}
\bibitem{CG} Chapoton, Fr{\'e}d{\'e}ric, and Giraudo, Samuele, \href{http://arxiv.org/abs/1310.4521}{Enveloping Operads and Bicoloured Noncrossing Configuration}, arXiv preprint arXiv:1310.4521, 2013. Downloaded July 1, 2016.
\bibitem{Hinz} Hinz, A. M., Klav\v{z}ar, S., Milutinovi\'{c}, U., and Petr, C., {\sl The Tower of Hanoi---Myths and Maths.} Springer, Basel, 2013.
\bibitem{knuth}Knuth, Donald E., ``Problem 11151,'' {\sl American Mathematical Monthly} {\bf 112} (2005), 367.
\bibitem{Lossers} Lossers, O. P., ``Solution to Problem 11151: Partitions of a Circular Set,'' {\sl American Mathematical 
Monthly} {\bf 114} (2007), 265--266.
\bibitem{OEIS}
OEIS Foundation Inc., The On-Line Encyclopedia of Integer Sequences, \url{https://oeis.org}.  Downloads are current as of
July 1, 2016.
\end{thebibliography}
\end{document}